\documentclass[12pt]{article}

\usepackage{fancyhdr, amsmath, amssymb, amsfonts, url, subfigure, dsfont, mathrsfs, graphicx, epsfig, subfigure, amsthm, tikz}
    \usepackage{epsfig}
    \usepackage{graphicx}
    \usepackage{color}
 
\newlength{\guillotine}
\newtheorem{thm}{Theorem}[section]
\newtheorem{cor}[thm]{Corollary}
\newtheorem{lemma}[thm]{Lemma}

\newtheorem{definition}[thm]{Definition}
\newtheorem{example}[thm]{Example}

\theoremstyle{remark}
\newtheorem{rem}[thm]{Remark}
\begin{document}

\title{Effective intrinsic ergodicity  for expanding interval maps}
\author{Mark Pollicott\footnote{Department of Mathematics, Warwick University, Coventry, CV8 2AH}}
\date{}

\maketitle


\section{ Introduction}

The variational principle is one of the central pillars of smooth ergodic theory and thermodynamic formalism.  It was originally formulated for hyperbolic systems by Ruelle \cite{ruelle} and proved in full generality by Walters \cite{walters}.  
It relates the thermodynamic  pressure function to entropies and integrals with respect to invariant measures.  

We will consider the particular case where  $T: I  \to I$ is  a piecewise $C^2$ mixing expanding map of the interval $I = [0,1)$ and 
where  $\phi: I \to  R$ is  a H\"older continuous  function.

\begin{definition}

We can denote by $m_\phi$  the unique equilibrium state associated to $\phi$, i.e., $m$ is the unique probability measure realising 
the following supremum
$$
P(\phi):= \sup \left\{
h(\mu) + \int \phi d\mu
\hbox{ : } \mu = \hbox{$T$-invariant probability}
\right\},
$$
where $h(\mu) $ is the entropy
(i.e., the variational principle).
\end{definition}


 For definiteness, we will consider the following well known examples.
 
 \medskip
 \noindent

\begin{example}
Let $\beta > 1$ and consider $T: [0,1) \to [0,1)$ defined by 
$$
T(x) = \beta x \qquad  \hbox{\rm (mod $1$)}
$$
then $T$ is called a $\beta$-transformation.  
This is piecewise affine  on the intervals 
$$\left[0, \frac{1}{\beta} \right), \left[\frac{1}{\beta}, \frac{2}{\beta}\right), 
\cdots
 \left[\frac{[1/\beta]-1}{\beta}, \frac{[1/\beta]}{\beta}\right), 
  \left[ \frac{[1/\beta]}{\beta}, 1\right)$$
  where $[ \cdot ]$ denotes the integer part of a real number.
 \end{example}

\begin{figure}
\centerline{
\begin{tikzpicture}[scale=1.05]
\draw (0,0) -- (0,5);
\draw (0,0) -- (5,0);
\draw[thick] (0,0) -- (1.1,5);
\draw[thick] (1.1,0) -- (2.2,5);
\draw[thick] (2.2,0) -- (3.3,5);
\draw[thick] (3.3,0) -- (4.4,5);
\draw[thick] (4.4,0) -- (5,3);
\node [below left, red] at (0,0) {$0$};
\node [below, red] at (5,0) {$1$};
\node [left, red] at (0,5) {$1$};
\node [below, red] at (1.1,0) {$\frac{1}{\beta}$};
\node [below, red] at (2.2,0) {$\frac{2}{\beta}$};
\node [below, red] at (4.4,0) {$\frac{[1/\beta]}{\beta}$};
\end{tikzpicture}
}
\caption{Graph of a $\beta$-transformation}
\end{figure}

\bigskip

We claim the following analogue of the 
Einseidler-Kaydev-Polo inequality
(originally established for Markov expanding maps, subshifts of finite type and Anosov diffeomorphisms \cite{kadyrov}, \cite{ruhr}) also holds in this context.

\begin{thm}\label{main}
There exists a constant $C_0 = C_0(\phi)$ such that for any $T$-invariant probability $\mu$ we have 
$$
\left| \int f d\mu - \int f dm_\phi \right|
\leq C_0 \|f\|
\sqrt{
P(\phi)
- \left(h(\mu) + \int \phi d\mu\right)} \eqno(1.1)
$$
where $\|\cdot\| $ is the norm of bounded variation.
\end{thm}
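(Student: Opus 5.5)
The plan is to follow the Kadyrov / Einsiedler–Kadyrov–Pohl strategy, with H\"older spaces replaced by functions of bounded variation. For piecewise expanding maps the transfer operator has a spectral gap on $BV$.

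\textbf{Step 1: spectral gap for the perturbed operators.} For $\phi$ H\"older and $f \in BV$, consider the family $\phi + tf$ and the transfer operator
\[
\mathcal L_{\phi+tf} g(x) = \sum_{Ty=x} e^{\phi(y)+tf(y)} g(y)
\]
acting on $BV$. By the Lasota--Yorke inequality (Rychlik, Keller), for mixing piecewise $C^2$ expanding maps, including $\beta$-transformations, $\mathcal L_{\phi}$ is quasi-compact on $BV$. Its leading simple eigenvalue is $e^{P(\phi)}$, and the rest of the spectrum lies in a strictly smaller disc. Since $t\mapsto \mathcal L_{\phi+tf}$ is analytic in operator norm on $BV$, with $\|\mathcal L_{\phi+tf}-\mathcal L_\phi\|\le C|t|\,\|f\|$, perturbation theory gives the following. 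For $|t|\,\|f\|\le \delta$, with $\delta$ depending only on $\phi$, the map $t\mapsto P(\phi+tf)$ is analytic and
\[
\left.\frac{d}{dt}P(\phi+tf)\right|_{t=0}=\int f\,dm_\phi ,
\]
together with the uniform second-derivative bound
\[
\left|\frac{d^2}{dt^2}P(\phi+tf)\right|\le C_1\|f\|^2
\]
on this range. This uniformity is the main obstacle. One must check that the Lasota--Yorke constants and the spectral gap persist under the perturbation uniformly in $f$ with $\|f\|\le 1$. That requires $BV$ to be a Banach algebra, so that multiplication by $e^{tf}$ is bounded with norm at most $e^{|t|\|f\|}$ roughly. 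It also requires controlling the equilibrium-state identification for non-continuous $\phi+tf$. To avoid the latter, I would work directly with the spectral radius $\lambda(t)$ of $\mathcal L_{\phi+tf}$, and show $\log\lambda(t)\ge h(\mu)+\int(\phi+tf)\,d\mu$ for every invariant $\mu$. This follows from the variational inequality for the pressure, which holds for such piecewise continuous potentials by the Hofbauer--Keller theory.

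\textbf{Step 2: Taylor expansion.} For an invariant $\mu$, set $\varepsilon = P(\phi)-h(\mu)-\int\phi\,d\mu\ge0$. The variational inequality gives, for $|t|\le \delta/\|f\|$,
\[
h(\mu)+\int\phi\,d\mu+t\int f\,d\mu\le P(\phi+tf)\le P(\phi)+t\int f\,dm_\phi+\tfrac{C_1}{2}t^2\|f\|^2 .
\]
Hence
\[
t\left(\int f\,d\mu-\int f\,dm_\phi\right)\le \varepsilon+\tfrac{C_1}{2}t^2\|f\|^2 .
\]
Replacing $f$ by $-f$ handles the sign.

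\textbf{Step 3: optimise in $t$.} Choose $t=\sqrt{\varepsilon}/\|f\|$. This is admissible when $\sqrt\varepsilon\le\delta$, and it gives $|\int f\,d\mu-\int f\,dm_\phi|\le(1+C_1/2)\|f\|\sqrt\varepsilon$. When $\sqrt\varepsilon>\delta$, use the trivial bound $|\int f\,d\mu-\int f\,dm_\phi|\le 2\|f\|_\infty\le 2\|f\|\le (2/\delta)\|f\|\sqrt\varepsilon$. Taking $C_0=\max(1+C_1/2,\,2/\delta)$ yields (1.1).
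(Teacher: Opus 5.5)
Your argument is correct, but it takes a different route from the paper's. The paper never perturbs the potential. After normalising so that $\mathcal L_\psi 1=1$ and $P(\phi)=0$, it applies the Pinsker inequality fibrewise to the conditional weights $q_i(x)$ of $\mu(\mathcal P\mid T^{-1}\mathcal B)$ against $p_i(x)=e^{\psi(y)}$, which bounds $\|\mathcal L_\psi^*\mu-\mu\|_{BV(I)^*}$ by $C\sqrt{\varepsilon}$. It then uses the spectral gap of the single operator $\mathcal L_\psi$ to invert $I-\mathcal L_\psi^*$ on functionals vanishing on constants, giving $\|\mu-m_\phi\|\le C_1\|\mathcal L_\psi^*\mu-\mu\|$. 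You instead take the Ruhr--Sarig route: the variational inequality for $\phi+tf$, plus a uniform second-order Taylor bound for $\log\lambda(t)$, optimised in $t$.

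The paper's argument needs only one operator and the entropy inequality at $\phi$ itself. Yours needs analytic perturbation theory and the inequality $\log\lambda(t)\ge h(\mu)+\int(\phi+tf)\,d\mu$ for discontinuous potentials. In return it identifies the constant: since $(\log\lambda)''(0)=\sigma^2_{m_\phi}(f)$ is the asymptotic variance, the same computation gives $|\int f\,d\mu-\int f\,dm_\phi|\le\bigl(\sqrt{2\sigma^2_{m_\phi}(f)}+o(1)\bigr)\sqrt{\varepsilon}$ as $\varepsilon\to0$. That is the local Ruhr--Sarig statement which \S4 aims at.

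There are three places to tighten.

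First, the variational inequality. Rather than appeal to Hofbauer--Keller for ``piecewise continuous'' potentials (a $BV$ function can have countably many jumps), prove it directly. Apply the paper's own Gibbs computation (2.3) to the normalised potential $\phi+tf+\log h_t-\log h_t\circ T-\log\lambda(t)$, where $h_t$ is the perturbed eigenfunction. This $h_t$ stays $\ge a/2$ for small $|t|\,\|f\|$, because $\|h_t-h\|_\infty\le\|h_t-h\|$.

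Second, the uniformity you flag as the main obstacle is easier than you fear. You need no uniform Lasota--Yorke constants, only $\|\mathcal L_{\phi+zg}-\mathcal L_\phi\|\le C|z|$ for complex $z$ with $|z|$ small and $\|g\|\le 1$. Kato's perturbation theory for the isolated simple eigenvalue then makes $\log\lambda$ analytic and uniformly bounded on a fixed complex disc. Cauchy's estimate on half that disc gives $|(\log\lambda)''|\le C_1$ uniformly in $g$.

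Third, if $\varepsilon=0$, let $t\downarrow 0$ instead of dividing by $t$.
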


 In the special case that $\beta \in \mathbb N$
 (or more generally the orbit of $1/\beta$ is finite)  the map is Markov.
In this case Theorem \ref{main}  would be a consequence of Kaydev's theorem \cite{kadyrov}..

\begin{example}[Parry measure]
 If we take $\phi=0$ then $P(0) = h(T)$ is the topological entropy and 
 the equilibrium state is the unique measure
$m_0$ which maximizes the the entropy. 
In particular, 
Parry showed that  $m_0$ is absolutely continuous with  
density $\rho: [0,1] \to \mathbb R^+$ given by 
$$
\rho(x) = 
\frac{
\sum_{n : T^n(1) > x} 
\beta^{-n}
}{
\int \left(  \sum_{n :  T^n(1) > x} 
\beta^{-n}
\right) dx}
$$
with  normalization constant $K>0$ \cite{parry}.
\end{example}

Theorem \ref{main} now has the following corollary (when $\phi=0$).

\begin{cor}
There exists a constant $C_0>0$ such that for any $T$-invariant measure $\mu$ we have 
$$
\left| \int f d\mu - \int f dm_0 \right|
\leq C_0 \|f\|
\sqrt{h(T)  - h(\mu)}.
\eqno(1.2)
$$
\end{cor}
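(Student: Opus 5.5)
The corollary is the special case $\phi = 0$ of Theorem \ref{main}, so the plan is simply to check that the hypotheses and the terms in (1.1) specialise correctly.

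First, the constant function $\phi = 0$ is trivially H\"older continuous, so Theorem \ref{main} applies to it. This gives a constant $C_0 = C_0(0) > 0$ such that for every $T$-invariant probability $\mu$ and every $f$ of bounded variation,
$$
\left| \int f d\mu - \int f dm_0 \right| \leq C_0 \|f\| \sqrt{P(0) - h(\mu)} ,
$$
since $\int 0 \, d\mu = 0$.

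Second, I would identify $P(0)$ and $m_0$. By the definition of pressure, $P(0) = \sup_\mu h(\mu)$. By the variational principle this supremum equals the topological entropy $h(T)$; for the $\beta$-transformation this is $\log \beta$. The equilibrium state $m_0$ for $\phi = 0$ is then, by definition, the unique measure of maximal entropy. In the $\beta$-transformation case this is Parry's absolutely continuous measure with the density $\rho$ given above.

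Substituting $P(0) = h(T)$ into the displayed inequality yields (1.2). The quantity under the square root is nonnegative, because $h(\mu) \leq h(T)$ by the variational principle, so the right-hand side is well defined.

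There is no genuine obstacle beyond Theorem \ref{main} itself. The only point needing care is the identification $P(0) = h(T)$ for a piecewise expanding map with discontinuities. Here $h(T)$ should be understood as the supremum of measure-theoretic entropies, or equivalently (for $\beta$-transformations, by Hofbauer/Parry) as $\log\beta$. Its uniqueness as a maximiser is part of the hypothesis that $m_0$ is the unique equilibrium state.
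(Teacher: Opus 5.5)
Your proposal is correct and matches the paper: the paper obtains the corollary by setting $\phi = 0$ in Theorem~\ref{main}, using $P(0) = h(T)$ and that $m_0$ is the (Parry) measure of maximal entropy. Your remark that $h(T)$ should be read as $\sup_\mu h(\mu)$, which equals $\log\beta$, for the discontinuous $\beta$-map is a useful clarification that the paper leaves implicit.
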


\medskip
\noindent
{\bf  A little history.}
A version of this result was apparently first proved in the thesis of Polo for doubling maps \cite{polo}, where it was attributed to Einseidler.  The above theorem was proved by Kadyrov for finite state  shift spaces
when $\phi=0$ (which was called effective intrinsic ergodicity) \cite{kadyrov}.
This was extended to H\"older potentials and  infinite state shift spaces by Ruhr \cite{ruhr}.  Subsequently, 
 Ruhr-Sarig gave an alternative proof
 and a local version  where the upper bound has  the variance replacing the norm
   \cite{ruhrsarig}.


\medskip

\section{Proof of  Theorem 1.2}

We begin by recalling the definition of the bounded variation semi-norm of a function $\psi: I \to \mathbb R$ which  takes the form  
$$
\|\psi\|_{BV} = \sup\left\{ \left|\sum_{i=0}^n \psi(x_i) - \psi(x_{i+1})\right|
\hbox{ : } 0 = x_0 < x_1 \cdots < x_n < x_{n+1} = 1
 \right\}
$$
and let $\|\psi\|_{L^1} = \int_I |\psi(x)| dx$ denote the $L^1$-norm.  
We let $BV(I)$ denote the Banach space 
of measurable functions $\psi: I \to \mathbb C$ with norm $\|\psi\| := \|\psi\|_{BV}
 + \|\psi\|_{L^1}  < +\infty$.

\begin{definition}
Let $T: I \to I$ be a monotone piecewise continuous map.  
Given  $\phi \in BV(I)$
we can define the {\it  transfer operator}
$\mathcal L_\phi : BV(I) \to BV(I)$ by
$$
\mathcal L_\phi w(x) = \sum_{Ty=x} e^{\phi(y)} w(y).  
$$
\end{definition}

Under  additional assumptions on the function $\phi$ the operator 
$\mathcal L_\phi : BV(I) \to BV(I)$
has a number of  useful properties.
For simplicity we first assume that $\phi$ is Lipschitz so that we can use more of the specific  methods form \cite{walters}.
In \S 4 we will consider more general results using the subsequent analysis in \cite{hk}, \cite{keller}.

 \begin{lemma}\label{transfer}  Let $\phi : I \to \mathbb R$ be Lipschitz. 
 \begin{enumerate}
 \item 
  There exists a maximal  eigenfunction $h \in BV(I)$ with $h > 0$ such that $\mathcal L_\phi h = e^{P(\phi)} h$ and, in particular, 
$$e^{P(\phi)} = \lim_{n\to +\infty} \left\| \mathcal L_\phi^n1(x) \right\|_\infty^\frac{1}{n}.$$
   \item
 There exists a non-atomic  probability measure $\nu_\phi$  such that $\mathcal L_\phi^*\nu_\phi = e^{P(\phi)} \nu_\phi$.   
 \item There exists $a > 0$ such that 
 $h(x) \geq a > 0$
 \item 
  $h$ is continuous except at the points $\{T^n(\beta) \hbox{ : } n \geq 0\}$.  
 \item 
 There exists $C>0$ and $0 < \rho < 1$ such that  $\|\mathcal L_\phi^n(w) - e^{nP(\phi)} h \nu (w)\| \leq C (\rho e^{P(\phi)})^n \|w\|$ for any $w\in BV(I)$ and $n \geq 1$.
 \end{enumerate}
\end{lemma}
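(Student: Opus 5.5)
The natural route is the Lasota--Yorke / Ionescu-Tulcea--Marinescu framework for $\mathcal L_\phi$ acting on $BV(I)$, in the form developed by Hofbauer--Keller and Walters for piecewise expanding maps. Everything flows from a single inequality. Write $\lambda_0=\inf|T'|>1$ and $N$ for the number of branches. Since $T$ is piecewise $C^2$ and expanding and $\phi$ is Lipschitz, each inverse branch is a contraction. The first step is therefore to prove, for some $k\ge1$, constants $0<\theta<e^{kP(\phi)}$ and $D>0$ with
$$
\|\mathcal L_\phi^k w\|_{BV}\le \theta\|w\|_{BV}+D\|w\|_{L^1}.
$$
To do this, expand the variation of $\sum_{Ty=x}e^{\phi(y)}w(y)$ branch by branch. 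The variation of $e^{S_k\phi}$ along a branch is controlled by the Lipschitz constant of $\phi$ times the contraction. The jumps at the endpoints of the branch intervals produce boundary terms, which we bound by $\sup|w|$ on each interval. That supremum is at most the variation plus the average, and this is where the $L^1$ term enters. The factor in front of $\|w\|_{BV}$ is then of order $2\sup e^{S_k\phi}$ summed over boundary contributions, against the growth $e^{kP(\phi)}$.

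The main obstacle is making $\theta<e^{kP(\phi)}$. For $\phi=0$ on the $\beta$-shift this is immediate, since $2\beta^{0}$-type terms are compared with $\beta^k$. For general Lipschitz $\phi$ one needs $\sup_x e^{S_k\phi(x)}\cdot(\#\text{boundary terms})$ to be exponentially smaller than $e^{kP(\phi)}$. I would first try to establish $P(\phi)>\sup\phi$ together with the fact that the number of $k$-cylinders meeting boundaries grows subexponentially. This is the standard bounded-distortion / Hofbauer tower argument, so I would invoke \cite{hk},\cite{keller} here. With the inequality in hand, compactness of the inclusion $BV\hookrightarrow L^1$ (Helly) gives quasicompactness. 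The spectral radius equals $\lim\|\mathcal L_\phi^n1\|_\infty^{1/n}$, which we identify with $e^{P(\phi)}$ using the variational principle and the Parry/Hofbauer description.

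For parts 1 and 2, take a Schauder--Tychonoff fixed point of $\nu\mapsto \mathcal L_\phi^*\nu/\mathcal L_\phi^*\nu(1)$ to obtain $\nu_\phi$. A positive eigenfunction $h$ is then a BV-limit point of the Cesàro averages $\frac1n\sum e^{-jP}\mathcal L_\phi^j1$, which are bounded in BV by the Lasota--Yorke inequality; Helly gives the limit. $\nu_\phi$ is non-atomic because $\nu_\phi([x-\epsilon,x+\epsilon])$ is bounded by $e^{-nP}\sup e^{S_n\phi}$ times a bounded number of preimage intervals, and this tends to $0$.

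For part 3, positivity $h\ge a>0$ uses mixing: some iterate of any interval covers $I$, so $\mathcal L_\phi^n$ of a bump has a positive lower bound, and this transfers to $h$ via $h=e^{-nP}\mathcal L_\phi^nh$. For part 4, $h=\lim e^{-nP}\mathcal L^n_\phi 1$, and each $\mathcal L^n_\phi1$ is continuous except at images of the branch endpoints, i.e.\ at $T^j(1)$ (the orbit of $\beta$'s endpoint). Uniform convergence off these points preserves continuity. For part 5, mixing implies that $e^{P(\phi)}$ is a simple eigenvalue and the only one on the circle of radius $e^{P(\phi)}$. The rest of the spectrum lies in a disc of radius $\rho e^{P(\phi)}$, and the spectral projection is $w\mapsto h\,\nu_\phi(w)$ after normalizing $\nu_\phi(h)=1$. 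This yields the exponential estimate.
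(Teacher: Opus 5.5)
Your proposal follows essentially the same route as the paper. That route is: quasi-compactness of $\mathcal L_\phi$ on $BV(I)$ from a Lasota--Yorke inequality (the paper imports this from Baladi--Keller, Theorem~1); positivity of $h$ from $h=e^{-nP(\phi)}\mathcal L_\phi^n h$ applied to an interval on which $h$ is bounded below and which is mapped onto $I$; continuity of $h$ as the uniform limit of $e^{-nP(\phi)}\mathcal L_\phi^n 1$; and the spectral gap from simplicity of the peripheral eigenvalue. Like the paper, which simply asserts $\theta_\psi<1$, you defer the key pressure gap $\lim_{k}\frac1k\sup_x S_k\phi(x)<P(\phi)$ to the literature. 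Note, however, that the stronger condition $P(\phi)>\sup\phi$ you propose to establish first fails for some Lipschitz $\phi$ (a tall, narrow spike at a non-periodic point), so it is the iterated condition, not $P(\phi)>\sup\phi$, that you actually need.
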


\begin{proof}
The existence of the eigenfunction  $h$ in 
part 1 follows from  (\cite{keller}, Remark 6.8) (as observed in  \cite{baladikeller} on p. 460), where the authors  
also   observe that it follows from their own Theorem 2).  
The existence of the measure in $\nu_\phi$ in    Part 2 follows from Proposition 6.10 of  \cite{keller}
(as also observed in   \cite{baladikeller}, p.460.)
\footnote{See (\cite{walters}, Lemma 3 and Lemma 9) for a proof of  the first 2 parts for a different Banach space and under the additional assumption that $\phi$ is Lipschitz}

Part 3 appears as Part (iii) in Lemma 9 in \cite{walters} 
(which in turn is based on Lemma 1 (i)  in \cite{walters})
 and we briefly recall the proof. 
Let $\mathcal P$ be the partition of $I$ into intervals of the form
$$
\mathcal P 
=
\left\{ 
\left[0, \frac{1}{\beta} \right), \left[\frac{1}{\beta}, \frac{2}{\beta}\right), 
\cdots
 \left[\frac{[1/\beta]-1}{\beta}, \frac{[1/\beta]}{\beta}\right), 
  \left[ \frac{[1/\beta]}{\beta}, 1\right)\right\}.\eqno(2.1)$$
  An interval $J \in \vee_{i=0}^{n-1}T^{-i } \mathcal P$ is called {\it a full interval of rank $n$}  if $T^n: J \to I$ is a bijection. 
Let $N > 0$ then
$I$ is covered by full intervals of rank at least $N$
 (by Lemma 1 (i) in \cite{walters}).
Thus  for any $x$ there  is a sequence $k_j \to +\infty$  of full  intervals $J_j \in  \vee_{i=0}^{k_j-1}T^{-i } \mathcal P$ with
rank at least $k_j$ with 
 $\{x\} = \cap_{j=1}^\infty J_j$.   We claim that there exists 
some $N > 0$ and some interval $J$ 
with $T^N : J \to I$ a  bijection (i.e., $J$ is of full rank $N$) and for which $b:= \inf_{y \in J} h(y) > 0$.
If this was not the case then for all $N$ we  could choose $x$ with $h(x) =0$ and 
since $\mathcal L_\phi^Nh(x) = e^{NP(\phi)} h(x)$
we have that  $h(y) = 0$
 for all preimages $y \in T^{-N}x$, which in turn would imply $h$ is identically zero giving the contradiction.
Thus for $J$ as in this claim, since   by assumption $T^NJ = I$ we have that for $x\in I$:
$$
h(x) = e^{-P(\phi)N} \mathcal L_\phi^Nh(x) \geq e^{-P(\phi)N} b e^{-N \|\phi\|_\infty} = :a.
$$

 Part 4 follows the same lines as the proof of Lemma 9 (iv) in \cite{walters}, and we briefly recall the idea. 
 We observe that $ e^{-nP(\phi)}\mathcal L_{\phi}^n1(x)$ is continuous at points not in $\cup_{i=1}^nT^i(\{\beta\})$
 (where $\{\beta\} = \beta - [\beta]$ denotes the fractional part of $\beta$) and continuous from the right at these points.  The result then follows from $h(x)  = \lim_{n \to +\infty}e^{-nP(\phi)}\mathcal L_{\phi}^n1(x)$.



For Part 5, we first observe that replacing $\phi$ by $\psi = \phi - \log h\circ T + \log h \in BV(I)$
(by virtue of Part 3)
 we have that the associated operator 
satisfies $\mathcal L_\psi  1 =1$ and has spectral radius $1$.
Moreover 
$$
\theta_\psi := \limsup_{n\to +\infty} \left\| \exp\left(\sum_{k=0}^{n-1} \psi(T^k x) \right) \right\|_\infty^\frac{1}{n} < 1.
$$
This corresponds to $\theta_\phi < e^{P(\phi)}$.
By 
Theorem 1 in \cite{baladikeller} we have that
$\mathcal L_{\phi}$ is quasi-compact and 
 the essential spectral radius. 
 is at most  $\theta_\phi$.
Therefore, it suffices to observe  that $e^{P(\phi)}$ 
is a simple eigenvalue and that 
there are no other eigenvalues of absolute value $e^{P(\phi)}$.
 \end{proof}

 It is convenient to consider coboundaries $u\circ T - u$ where $u \in BV(I)$.  
 The following result follows easily from the definitions (and Part (iii) of Lemma 2.2).

\begin{lemma}
We can add constants and coboundaries in $BV(I)$ to $\phi$ without changing the equilibrium state $m_\phi$.  
\footnote{The new function $\phi$ may no longer be Lipschitz since $h$ was not necessarily Lipschitz}
\end{lemma}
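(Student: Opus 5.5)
The plan is to show that the variational functional $\mu \mapsto h(\mu) + \int \phi \, d\mu$ does not change, up to an additive constant independent of $\mu$, when $\phi$ is replaced by $\phi + c + u\circ T - u$ with $c \in \mathbb R$ and $u \in BV(I)$. Uniqueness of the equilibrium state then forces the maximiser to be unchanged. So we fix $\psi = \phi + c + u\circ T - u$ and an arbitrary $T$-invariant probability measure $\mu$.

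\emph{Step 1: the coboundary integrates to zero.} Since $u \in BV(I)$, $u$ is bounded: $|u(x)| \leq |u(0)| + \|u\|_{BV}$, after choosing a representative suitably if $BV(I)$ is read modulo null sets. In particular $u \in L^1(\mu)$. Invariance of $\mu$ gives $\int u\circ T \, d\mu = \int u \, d\mu$, and hence $\int \psi \, d\mu = \int \phi \, d\mu + c$. The entropy $h(\mu)$ depends only on $\mu$ and $T$, so
$$h(\mu) + \int \psi \, d\mu = h(\mu) + \int \phi \, d\mu + c$$
for every invariant $\mu$.

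\emph{Step 2: compare the suprema and maximisers.} Taking the supremum over $\mu$ gives $P(\psi) = P(\phi) + c$. Moreover, $\mu$ attains the supremum for $\psi$ exactly when it attains it for $\phi$, so $m_\psi = m_\phi$. The paper calls this the unique equilibrium state, and the equality shows $\psi$ also has it as its unique equilibrium state. As a byproduct, the gap $P(\phi) - (h(\mu) + \int \phi \, d\mu)$ appearing in (1.1) is unchanged.

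\emph{Step 3: the normalisation of Lemma 2.2.} The case of main interest is $u = \log h$ and $c = -P(\phi)$. Here one must check that $\log h \in BV(I)$. Part 3 of Lemma 2.2 gives $h \geq a > 0$, and $\log$ is Lipschitz on $[a, \|h\|_\infty]$ with constant $1/a$, so $\|\log h\|_{BV} \leq a^{-1}\|h\|_{BV}$ and $\log h$ is bounded.

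This verification that the coboundary is bounded and $BV$, which is what lets Step 1 apply, is the only genuine point. The rest is formal.
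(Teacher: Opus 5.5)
Your proposal is correct and takes the same approach as the paper, which only says the lemma ``follows easily from the definitions (and Part (iii) of Lemma 2.2)''. A bounded coboundary integrates to zero against every invariant measure, so $h(\mu)+\int\phi\,d\mu$ shifts by the constant $c$ and the maximiser and its uniqueness are unchanged; separately, $h\geq a>0$ gives $\log h\in BV(I)$.

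One small slip: with your convention $\psi=\phi+c+u\circ T-u$, the normalisation $\psi=\phi-P(\phi)+\log h-\log h\circ T$ used later corresponds to $u=-\log h$, not $u=\log h$. This is harmless, since $-\log h\in BV(I)$ as well.
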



In particular,  we can consider $h \in BV(I)$ as in Lemma 2.2 (1) and observe that since 
Lemma 2.2 (3) we have $h \geq a > 0$ we have that $\log h \in BV(I)$.
In particular,  we can replace $\phi$ by $\psi = \phi -  P(\phi)  + \log h - \log h\circ T $  and then we can assume 
 can assume that the associated   transfer operator
$\mathcal L_\psi : BV(I) \to BV(I)$  satisfies $\mathcal L_\psi 1 = 1$.  Thus,  without loss of generality we can assume $P(\phi)=0$.

We now  consider a simple  lemma, which is a special case  of the well known Pinsker inequality.

\begin{lemma}[Pinsker Inequality]
Given probability vectors $\underline q = (q_1, \cdots, q_k)$ and  $\underline p = (p_1, \cdots, p_k)$
we then have the basic inequality

$$
-\sum_{i=1}^k q_i \log q_i + \sum_{i=1}^k q_i \log p_i
\leq 
-\frac{1}{2} \sum_{i=1}^k |p_i - q_i|^2.  \eqno(2.2)
$$
\end{lemma}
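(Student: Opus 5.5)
This is Pinsker's inequality in the form $-D(\underline q\|\underline p)\le -\tfrac12\|\underline p-\underline q\|_2^2$. Here the left side of (2.2) equals $-\sum_i q_i\log(q_i/p_i)$. I will derive it from the one-variable inequality
$$
\log t \le t-1-\tfrac12 (t-1)^2 \cdot c(t)
$$
or, more cleanly, from a convexity argument. I first dispose of trivial cases. If some $p_i=0$ with $q_i>0$, the left side is $-\infty$ and there is nothing to prove. Terms with $q_i=0$ contribute $0$ to the left side, using the convention $0\log 0=0$, so they can be handled within the general argument.

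The approach I would take is the standard interpolation proof. Set $F(\underline p)=\sum_i q_i\log(q_i/p_i)-\tfrac12\sum_i(p_i-q_i)^2$; we must show $F\ge 0$ on the simplex. Consider the path $\underline p(s)=\underline q+s(\underline p-\underline q)$ for $s\in[0,1]$ and let $g(s)=F(\underline p(s))$. Then $g(0)=0$. Writing $d_i=p_i-q_i$ and using $\sum_i d_i=0$, we get
$$
g'(s)=-\sum_i \frac{q_i d_i}{q_i+sd_i}-s\sum_i d_i^2,
$$
so $g'(0)=-\sum_i d_i=0$. Differentiating again gives
$$
g''(s)=\sum_i \frac{q_i d_i^2}{(q_i+sd_i)^2}-\sum_i d_i^2 .
$$

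The key step is showing $g''\ge 0$. Since $\underline p(s)$ is itself a probability vector, we have $0\le q_i+sd_i\le 1$, and hence
$$
\frac{q_i}{(q_i+sd_i)^2}\ge \frac{q_i}{q_i+sd_i}.
$$
This last quantity is not obviously $\ge 1$ term by term, so here I switch to Cauchy--Schwarz. We have
$$
\Bigl(\sum_i |d_i|\Bigr)^2\le \sum_i\frac{q_i d_i^2}{(q_i+sd_i)^2}\cdot\sum_i\frac{(q_i+sd_i)^2}{q_i}.
$$
This is awkward, so I would instead follow the classical route: prove the two-point version first, then reduce. I take the two-point version as the main obstacle. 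That version is the calculus inequality $a\log\frac ab+(1-a)\log\frac{1-a}{1-b}\ge 2(a-b)^2$. It is proved the same way, with $g''(s)=\frac{a(1-a)}{(\cdot)^2}\cdots\ge 4-4\ge0$, using $x(1-x)\le\tfrac14$ for the interpolated point.

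Next I reduce to two points. Let $A=\{i: p_i\ge q_i\}$, and merge coordinates into $a=q(A)$ and $b=p(A)$. By the log-sum inequality (the data-processing inequality), $D(\underline q\|\underline p)\ge D((a,1-a)\|(b,1-b))\ge 2(a-b)^2$. Finally, since $\sum_i|p_i-q_i|=2(b-a)$ and $\sum_i d_i^2\le(\sum_i|d_i|)^2$, we obtain
$$
\tfrac12\sum_i d_i^2\le \tfrac12\cdot 4(a-b)^2=2(a-b)^2\le D(\underline q\|\underline p),
$$
which is (2.2). The log-sum step is routine via Jensen's inequality applied to the concave function $\log$ on each block, and this completes the argument.
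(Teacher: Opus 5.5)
Your final route is the classical proof of Pinsker's inequality followed by the comparison of $\ell^2$ and $\ell^1$ norms: merge coordinates into $A=\{i: p_i\ge q_i\}$ and its complement via the log-sum inequality, prove the binary bound $D\bigl((a,1-a)\,\|\,(b,1-b)\bigr)\ge 2(a-b)^2$, then use $\sum_i|d_i|=2(b-a)$ and $\sum_i d_i^2\le(\sum_i|d_i|)^2$. That is all the paper intends; it gives no proof and only calls (2.2) a special case of Pinsker. The reduction and the final comparison are correct.

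The weak point is the binary step. You single it out as the main obstacle and then give only a garbled formula. Read literally as ``the same way'' as your first attempt, i.e.\ moving the second argument along $x=a+s(b-a)$, it fails. For $G(s)=D\bigl((a,1-a)\,\|\,(x,1-x)\bigr)-2(x-a)^2$ one has
\[
G''(s)=(b-a)^2\Bigl(\frac{a}{x^2}+\frac{1-a}{(1-x)^2}-4\Bigr),
\]
which is negative, for instance, at $a=0.01$, $x=0.1$. What does hold is the first-derivative statement
\[
\frac{d}{dx}\Bigl[D\bigl((a,1-a)\,\|\,(x,1-x)\bigr)-2(x-a)^2\Bigr]=(x-a)\Bigl(\frac{1}{x(1-x)}-4\Bigr).
\]
This has the sign of $x-a$ because $x(1-x)\le\frac14$, so the minimum value $0$ is attained at $x=a$. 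Alternatively, fix $b$ and vary the first argument $y$; there the second derivative is $\frac{1}{y(1-y)}-4\ge 0$. Either repair uses exactly the bound $x(1-x)\le\frac14$ you cite, and with it your proof is complete.

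Note also that the direct interpolation you abandoned works if you move $\underline q$ instead of $\underline p$. First discard indices with $p_i=q_i=0$ and the trivial case $p_i=0<q_i$. Put $e=\underline q-\underline p$, $\underline q(s)=\underline p+se$, and $g(s)=\sum_i q_i(s)\log\frac{q_i(s)}{p_i}-\frac12 s^2\sum_i e_i^2$. Then $g(0)=0$, $g'(0)=\sum_i e_i=0$, and
\[
g''(s)=\sum_i\frac{e_i^2}{q_i(s)}-\sum_i e_i^2\ge 0
\]
term by term, since $0<q_i(s)\le 1$ for $s\in[0,1)$. Hence $g(1)\ge 0$ by continuity, which is (2.2). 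Because (2.2) only asks for the $\ell^2$ bound, this convexity argument suffices and avoids both the binary reduction and the log-sum step. Your route, by contrast, proves the stronger $\ell^1$ Pinsker inequality along the way.
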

\bigskip


We can now follow the lines of the  standard proof of the variational principle
(cf. \cite{walters}).  
Let $\mathcal P = \{P_i\}_{i=1}^k$ be the partition of $I$ into intervals 
given in (2.1) 
   where $k = k(x) =  [1/\beta]$ or $[1/\beta] + 1$, as appropriate.
Since this is clearly a generating partition we have that  the entropy satisfies $h(\mu) = H_\mu (\mathcal P | T^{-1}\mathcal P)$ \cite{walters}.
We can also make the following choices:.
\begin{enumerate}
\item
Given $x \in I$ we can  let 
$p_1(x), \cdots p_{k(x)}(x)$ take the values 
$\{e^{\psi(y)} \hbox{ : }Ty=x\}$; and 
\item
For any  $T$-invariant probability measure $\mu$ we 
 let $q_1(x), \cdots,  q_k(x)$ ($1\leq i \leq k(x)$)
take the values 
 $\mu( \mathcal P | T^{-1} \mathcal B)(y)$ where $T(y) = x$
 for a.e. ($\mu$) $x \in X$ where  $\mathcal B$ is the Borel  sigma algebra. \end{enumerate}
\noindent
We can substitute these choices  into (2.2) and integrate with respect to $\mu$ to get:
$$
\begin{aligned}
&h(T, \mu) + \int \psi(x) d\mu(x)\cr
&=
- \int 
\left(
\sum_{y \in T^{-1}}  \mu(\mathcal P | T^{-1} \mathcal B)(y) \log  \mu(\mathcal P | T^{-1} \mathcal B)(y)
 + \sum_{y \in T^{-1}x}  \mu(\mathcal P | T^{-1} \mathcal B)(y)  \psi(y)
\right) d\mu(x)\cr
&= - \int 
\left(
\sum_{i=1}^k q_i(x) \log q_i(x)
 + \sum_{i=1}^k  q_i(x)  \psi(x)
\right) d\mu(x)\cr
&\leq 
-\frac{1}{2}
\int  \sum_{i=1}^k |p_i(x) - q_i(x)|^2 d\mu(x).\cr
\end{aligned}
\eqno(2.3)
$$
We can get a slightly weaker, but  more useful,   lower bound by  using  the Cauchy-Schwartz inequality to write 
$$
\left( \int \sum_{i=1}^k  |p_i(x) - q_i(x)| d\mu(x)\right)^2
\leq 
\int  \sum_{i=1}^k  |p_i(x) - q_i(x)|^2 d\mu(x).
\eqno(2.4)
$$
Moreover, we can define the usual  norm  on the dual space $BV(I)^*$ of $BV(I)$ by
$$\|\nu\| = \sup\left\{\left|\int g d\nu \right| \hbox{ : } g \in BV(I) \hbox{ with }  \|g \| \leq 1\right\}$$
for $\nu \in BV(I)^*$.
\noindent 
This leads to the following.

\bigskip
\noindent
\begin{lemma}
$\|\mathcal L_\psi^* \mu -\mu\| \leq \int  \sum_{i=1}^k  |p_i(x) - q_i(x)| d\mu(x)$.
\end{lemma}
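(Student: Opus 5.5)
Fix a test function $g \in BV(I)$ with $\|g\| \leq 1$; the task is to bound $\left|\int g \, d(\mathcal L_\psi^*\mu) - \int g \, d\mu\right|$. By duality $\int g \, d(\mathcal L_\psi^*\mu) = \int \mathcal L_\psi g \, d\mu$. Using $T$-invariance of $\mu$ together with the defining property of conditional expectation, I will rewrite $\int g \, d\mu$ in the same form, as an integral over $x$ of a sum over the preimages of $x$. Concretely,
$$
\int g\, d\mu = \int E_\mu(g \mid T^{-1}\mathcal B)\, d\mu, \qquad E_\mu(g\mid T^{-1}\mathcal B)(y) = \sum_{Tz = Ty} \mu(\mathcal P \mid T^{-1}\mathcal B)(z)\, g(z).
$$
This formula holds because $\mathcal P$ is a partition into intervals on which $T$ is injective. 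So, conditioned on $T^{-1}\mathcal B$ (that is, on the value $x = Ty$), the only remaining information is which element $P_i$ of $\mathcal P$ contains the point, and the weight of the preimage in $P_i$ is exactly $q_i(x)$.

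Since the right-hand side is a function of $x = Ty$, invariance of $\mu$ gives
$$
\int g\, d\mu = \int \sum_{i=1}^{k(x)} q_i(x)\, g(y_i(x))\, d\mu(x),
$$
where $y_i(x)$ denotes the preimage of $x$ lying in $P_i$. On the other hand, by definition of the transfer operator,
$$
\mathcal L_\psi g(x) = \sum_{i=1}^{k(x)} p_i(x)\, g(y_i(x)).
$$
Subtracting the two expressions gives
$$
\left|\int g\, d(\mathcal L_\psi^*\mu) - \int g\, d\mu\right| \leq \int \sum_{i=1}^{k(x)} |p_i(x) - q_i(x)|\, |g(y_i(x))|\, d\mu(x) \leq \|g\|_\infty \int \sum_{i} |p_i - q_i|\, d\mu.
$$

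To finish, I use the elementary bound $\|g\|_\infty \leq \|g\|_{BV} + \|g\|_{L^1} = \|g\|$. This holds because for any $y,z$ we have $|g(y)| \leq |g(z)| + \|g\|_{BV}$, and integrating in $z$ gives $|g(y)| \leq \|g\|_{L^1} + \|g\|_{BV}$. Taking the supremum over $\|g\| \leq 1$ then yields the Lemma.

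The main obstacle is the conditional-expectation identity, specifically matching $q_i(x)$ correctly to the preimage in $P_i$. At points $x$ where the number of preimages $k(x)$ drops (the last branch of the $\beta$-map is not full), the indexing of the $p_i$ and $q_i$ must be the same, and the terms with no preimage must vanish. This holds $\mu$-a.e. because $\mu(P_i \mid T^{-1}\mathcal B)$ vanishes on the set where $P_i \cap T^{-1}x = \emptyset$. I would verify the identity first for indicator functions $g = \chi_{P_i} \cdot (u \circ T)$ with $u$ bounded measurable, and then extend by linearity and a monotone class argument to all bounded measurable $g$.
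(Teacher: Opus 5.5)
Your proposal is correct and follows the same route as the paper. You pair $\mathcal L_\psi^*\mu - \mu$ with a test function $g$, write both $\int \mathcal L_\psi g \, d\mu$ and $\int g \, d\mu$ as $\mu$-integrals of sums over the preimages $y_i(x)$ weighted by $p_i(x)$ and $q_i(x)$, and bound the difference by $\sup|g| \int \sum_i |p_i - q_i| \, d\mu$. You also supply two steps the paper leaves implicit: the conditional-expectation identity (together with invariance of $\mu$) that justifies the $q_i$-expression for $\int g \, d\mu$, and the estimate $\sup|g| \leq \|g\|_{BV} + \|g\|_{L^1}$ needed to pass from the paper's condition $\|g\|_\infty \leq 1$ to the unit ball of $BV(I)$ that defines the dual norm.
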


\medskip
\noindent
\begin{proof}
Given $g \in B_{BV}$ with $\|g\|_\infty \leq 1$ we have 
$$
\begin{aligned}
\left|\int (\mathcal L_\psi g - g) d\mu \right| 
&\leq 
\int \sum_{i=1}^k g(y) \left| p_i(x)  - q_i(x)\right|
 d\mu(x)\cr
&\leq 
\int \sum_i  \left| p_i(x)  - q_i(x)\right|
 d\mu(x)\cr
\end{aligned}
$$
as required.
\end{proof}
\medskip


Finally, we have the following lemma.

\begin{lemma}
There exists $C_1 > 0$ such that 
$\|\mu - m_\phi\| \leq C_1 \|\mathcal L_\psi^{*}\mu - \mu\|$.
\end{lemma}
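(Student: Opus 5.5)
Since $\mathcal L_\psi 1 = 1$ and $m_\phi$ is the unique equilibrium state, we have $m_\phi = h\nu$ normalised, and after the normalisation $\psi = \phi - P(\phi) + \log h - \log h\circ T$ the measure $m_\phi$ is the $\mathcal L_\psi^*$-fixed probability. Lemma 2.2 Part 5, applied to $\mathcal L_\psi$ (which has the same spectral picture as $\mathcal L_\phi$ up to conjugation by multiplication by $h$ and scaling by $e^{P(\phi)}$), then reads
$$\left\|\mathcal L_\psi^n g - \int g\, dm_\phi\right\| \leq C\rho^n \|g\|, \qquad g\in BV(I),\ n\geq 1.$$
To justify this I would use the boundedness of multiplication by $h^{\pm1}$ on $BV(I)$, which holds because $h\in BV(I)$ and $h\geq a>0$ by Part 3.

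The idea is to write $g - \int g\,dm_\phi$ as $w - \mathcal L_\psi w$ with $w\in BV(I)$ controlled by $\|g\|$. Set $g_0 = g - \int g\, dm_\phi$. Then $\int g_0\, dm_\phi = 0$, so $\|\mathcal L_\psi^n g_0\| \leq C\rho^n\|g_0\|$, and $\|g_0\|\leq (1+c)\|g\|$ since $|\int g\,dm_\phi|\leq \|g\|_\infty \leq \|g\|_{BV}+\|g\|_{L^1}$. Define $w = \sum_{n\geq 0}\mathcal L_\psi^n g_0$. This converges in $BV(I)$ with
$$\|w\| \leq \left(1+\frac{C\rho}{1-\rho}\right)(1+c)\|g\| =: C_1\|g\|,$$
and telescoping gives $w - \mathcal L_\psi w = g_0$.

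Now let $\mu$ be a $T$-invariant probability measure, viewed as an element of $BV(I)^*$ (bounded since $|\int g\,d\mu|\leq\|g\|_\infty\leq\|g\|$). Because $\mu$ is a probability measure,
$$\int g\, d\mu - \int g\, dm_\phi = \int g_0\, d\mu = \int (w-\mathcal L_\psi w)\,d\mu = -\int w \, d(\mathcal L_\psi^*\mu-\mu).$$
Hence $|\int g\,d\mu - \int g\,dm_\phi| \leq \|w\|\,\|\mathcal L_\psi^*\mu - \mu\| \leq C_1\|g\|\,\|\mathcal L_\psi^*\mu-\mu\|$. Taking the supremum over $\|g\|\leq 1$ gives $\|\mu - m_\phi\|\leq C_1\|\mathcal L_\psi^*\mu-\mu\|$.

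The main obstacle is transferring the spectral gap of Lemma 2.2 Part 5 to $\mathcal L_\psi$ on $BV(I)$, since $\psi$ is only BV and no longer Lipschitz. The cleanest route is the conjugation identity $\mathcal L_\psi g = e^{-P(\phi)} h^{-1}\mathcal L_\phi(hg)$, which reduces everything to Part 5 for $\mathcal L_\phi$. This works because $BV(I)$ is a Banach algebra and $1/h\in BV(I)$ by the lower bound in Part 3. Matching the projection $h\,\nu(hg)$ with $\int g\,dm_\phi$ is then just the identification $dm_\phi = h\,d\nu$, with $\nu$ normalised so that $\int h\,d\nu=1$.
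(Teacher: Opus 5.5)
Your proof is correct and is essentially the paper's argument written on the predual side: the paper sums the Neumann series $\sum_{n\ge 0}\mathcal L_\psi^{*n}(\mathcal L_\psi^*-I)\mu$ using the spectral gap of Part 5, and your $w=\sum_{n\ge 0}\mathcal L_\psi^n g_0$ is exactly the paper's $Q=\sum_n U^n$ applied to the centred test function $g_0$. You also supply two steps the paper leaves implicit: the transfer of the spectral gap from $\mathcal L_\phi$ to $\mathcal L_\psi$ via $\mathcal L_\psi g = e^{-P(\phi)}h^{-1}\mathcal L_\phi(hg)$, and the centring (equivalently, $(\mathcal L_\psi^*\mu-\mu)(1)=0$) that makes the series converge.
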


\begin{proof}
It is at this point that we use  the result from part 5  of Lemma 2.2  that there
exists $0 < \rho < 1$ such that 
$\mathcal L^{*n}\mu = m + U^{*n}\mu$ where $\|U^{*n}\| = O(\rho^n)$.  From this we conclude that 
the  series  $Q = \sum_{n=0}^\infty U^n$ converges.
We can then write
$$
\begin{aligned}
m_\phi &= \lim_{n \to +\infty} \mathcal L_\psi^{*n}\mu = \mu +  \sum_{n=0}^\infty
\mathcal L_\psi^{*n}(\mathcal L_\psi^{*}-I)\mu.
\end{aligned}
$$
Finally, we can write 
$\|\mu - m_\phi\| \leq \|Q\|.\|(\mathcal L_\psi^{*}-I)\mu\|$.
\end{proof}
\bigskip

Combining (2.3), (2.4) and the inequalities in Lemma 2.6 and 2.7  completes the proof of Theorem 1.2.

\section{A generalization of Theorem \ref{main}}

Theorem \ref{main} for $\beta$-transformations is a special case of a more general result for piecewise monotonic transformation 
where there exist
$b_0=0 < b_1 < \cdots < b_N=1$ such that the restriction $T|_{(b_i, b_{i+1})}: (b_i, b_{i+1}) \to I$ is continuous and strictly monotone.

We recall the following property for $T$

\begin{definition}
We will say that $T$ is topologically exact if the any $\epsilon > 0$ there exists $n$ such that for any $x\in I$ we have $T^n(B(x,\epsilon) = I$.
\end{definition}

\begin{definition}
We say that a function $\phi: I \to \mathbb R$ has summable variation if 
$$
\sum_n\hbox{var}_n(\phi) < +\infty
$$
where $
\hbox{var}_n(\phi)  = \sup \{|\phi(x) - \phi(y)| \hbox{ : } x, y \hbox{ are  in same monotonicity interval of $T^n$} \}
$,
for $n \geq 1$.
\end{definition}

The generalization of Theorem \ref{main} takes the following form:

\begin{thm}\label{generaler}
Let $T$ be a piecewise monotonic transformation which is topologically exact.  Let$\phi: I \to \mathbb R$ be a (continuous) 
function of bounded variation such that either 
\begin{enumerate}
\item[a)] $\psi$ has summable variation; or
\item[b)] $\psi$ is H\"older continuous.
\end{enumerate}
 Then  exists a constant $C_0 = C_0(\phi)$ such that for any $T$-invariant probability $\mu$ we have 
$$
\left| \int f d\mu - \int f dm_\phi \right|
\leq C_0 \|f\|
\sqrt{
P(\phi)
- \left(h(\mu) + \int \phi d\mu\right)} \eqno(3.1)
$$
where $\|\cdot\| $ is the norm of bounded variation.
\end{thm}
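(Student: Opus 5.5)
The plan is to repeat the three-step scheme of Section 2 (Pinsker, then the Lemma bounding $\|\mathcal L_\psi^*\mu-\mu\|$, then the resolvent Lemma), with the $\beta$-transformation inputs replaced by their general piecewise monotonic analogues. The only genuinely new input is the analogue of Lemma \ref{transfer}. So the first step is to establish, for topologically exact piecewise monotonic $T$ and $\phi$ of bounded variation satisfying (a) or (b), the following facts on $BV(I)$:
\begin{enumerate}
\item a positive eigenfunction $h\in BV(I)$ with $\mathcal L_\phi h=e^{P(\phi)}h$;
\item a conformal measure $\nu_\phi$ with $\mathcal L_\phi^*\nu_\phi=e^{P(\phi)}\nu_\phi$;
\item a uniform lower bound $h\geq a>0$;
\item a spectral gap $\|\mathcal L_\phi^n w-e^{nP(\phi)}h\,\nu_\phi(w)\|\leq C(\rho e^{P(\phi)})^n\|w\|$.
\end{enumerate}
For (a) I would use Hofbauer--Keller \cite{hk}; for (b) I would note that H\"older continuity together with uniform expansion of the branches implies summable variation, since the monotonicity intervals of $T^n$ shrink exponentially. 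For quasi-compactness I would again invoke Baladi--Keller \cite{baladikeller}, which gives essential spectral radius at most $\theta_\phi<e^{P(\phi)}$ provided the variation and sup-norm conditions hold. Simplicity of the leading eigenvalue, and the absence of other peripheral eigenvalues, would come from topological exactness via the standard argument in \cite{keller}.

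The lower bound $h\geq a$ is proved as in Part 3 of Lemma \ref{transfer}, with topological exactness replacing full intervals. Since $h\not\equiv 0$ and $h\in BV$, there is an interval $J$ of length $\epsilon$ on which $h\geq b>0$. Exactness gives $n$ with $T^nJ=I$, so every $x$ has a preimage $y\in J$ under $T^n$. Hence
$$h(x)=e^{-nP(\phi)}\mathcal L_\phi^n h(x)\geq e^{-nP(\phi)}e^{-n\|\phi\|_\infty}b.$$
(Bounded variation of $h$ is what guarantees the interval $J$ exists.) With $h\geq a$ in hand, $\log h\in BV(I)$, so we may normalise $\psi=\phi-P(\phi)+\log h-\log h\circ T$, giving $\mathcal L_\psi 1=1$ and $P(\psi)=0$.

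Next I would redo (2.3) with $\mathcal P$ the partition into monotonicity intervals $(b_i,b_{i+1})$. This partition is generating because the cylinders of $T^n$ shrink: exactness forces cylinder lengths to $0$, since otherwise some interval would never be expanded. Hence $h(\mu)=H_\mu(\mathcal P\mid T^{-1}\mathcal B)$ for every invariant $\mu$, and also
$$h(\mu)+\int\phi\,d\mu-P(\phi)=h(\mu)+\int\psi\,d\mu.$$
Setting $p_i(x)=e^{\psi(y)}$ and $q_i(x)=\mu(\mathcal P\mid T^{-1}\mathcal B)(y)$ over the preimages $y$ of $x$ (at most $N$ of them), both are probability vectors, the first because $\mathcal L_\psi1=1$. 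Pinsker and Cauchy--Schwarz then give
$$\int\sum_i|p_i-q_i|\,d\mu\leq\sqrt{2\bigl(P(\phi)-h(\mu)-\int\phi\,d\mu\bigr)}.$$
The two subsequent Lemmas carry over verbatim. The first only needs the pointwise identity
$$\int(\mathcal L_\psi g-g)\,d\mu=\int\sum_i g(y_i)(p_i-q_i)\,d\mu,$$
together with $\|g\|_\infty\leq\|g\|$. The second only needs the spectral gap of step one, transported to the normalised operator.

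I expect the main obstacle to be the first step, and specifically the spectral gap for $\mathcal L_\psi$ on $BV$ under only summable variation. The condition $\theta_\psi<1$, i.e. $\theta_\phi<e^{P(\phi)}$, is not automatic for general piecewise monotonic maps. I would try to obtain it from uniform expansion combined with the Hofbauer--Keller bound relating pressure to the topological entropy of the boundary orbits. Failing that, I would add $\theta_\phi<e^{P(\phi)}$ as a hypothesis. A secondary technical point is ensuring that $\psi$ stays in $BV$ with the right properties after the coboundary change, which relies on $h\geq a$.
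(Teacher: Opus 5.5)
Your skeleton matches the paper's. You prove the general analogue of Lemma~\ref{transfer} (the paper's Lemma~\ref{transferer}), normalise to $\psi=\phi-P(\phi)+\log h-\log h\circ T$, and rerun the Pinsker step, the bound on $\|\mathcal L_\psi^*\mu-\mu\|$ and the resolvent lemma on $BV(I)$.

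The gap is in the spectral gap itself. Quasi-compactness is needed for the convergence of $\sum_n U^n$, and hence for the constant $C_1$ in the resolvent lemma. It requires $\theta_\phi<e^{P(\phi)}$, i.e.\ $\|\prod_{i=0}^{n-1}g\circ T^i\|_\infty<1$ for some $n$, where $g=e^{\phi-P(\phi)}$. Without it, the Baladi--Keller estimate only bounds the essential spectral radius by $\theta_\phi$, which can coincide with the spectral radius. You leave this open: the suggested route via boundary-orbit entropy is not carried out, and adding the inequality as a hypothesis proves a weaker theorem.

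The Baladi--Keller estimate is a statement about arbitrary weights of bounded variation. So this inequality is exactly where hypotheses (a)/(b) must enter, and your proposal does not say how. The paper's route is as follows.
\begin{enumerate}
\item Obtain $\nu_\phi$ first, as a fixed point of $\nu\mapsto\mathcal L_\phi^*\nu/\nu(\mathcal L_\phi1)$, with eigenvalue $\lambda$ (later identified with $e^{P(\phi)}$).
\item Set $g=e^{\phi}/\lambda$ and quote Hofbauer--Keller (pp.~135--136) for $\|\prod_{i<n}g\circ T^i\|_\infty<1$ for large $n$, in each of cases (a) and (b).
\item This gives a Lasota--Yorke inequality (Hofbauer--Keller, Lemma~7) on $BV$ in case (a), and on Keller's space $H^{\alpha,1}$ in case (b).
\item Together with $L^1$-compactness of the unit balls and Ionescu-Tulcea--Marinescu, this yields the spectral gap. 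Topological exactness rules out other peripheral eigenvalues.
\end{enumerate}

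Two smaller points. First, your reduction of (b) to (a) uses uniform expansion, which the statement does not list. Topological exactness does force the monotonicity intervals of $T^n$ to shrink, so your generating-partition claim is fine, but at no particular rate. Some expansion really is needed in (b). For the Manneville--Pomeau map $x\mapsto x+x^{1+\alpha}\pmod 1$ with $0<\alpha<1$, the H\"older potential $-\log|T'|$ has two equilibrium states, $\delta_0$ and the absolutely continuous one. So make expansion an explicit hypothesis. The paper instead handles (b) directly on $H^{\alpha,1}$, using that H\"older functions lie in $BV_p\subset H^{\alpha,1}$ with $p=1/\alpha$.

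Second, in your proof of $h\ge a$, ``$h\not\equiv 0$'' does not give an interval on which $h\ge b>0$, since a BV function can be nonzero at a single point. Instead use $\nu_\phi(h)=1$, non-atomicity of $\nu_\phi$, and countability of the discontinuities of $h$ to find a continuity point where $h>0$. With that repair, your exactness argument is a real addition: the paper lists Part~3 of Lemma~\ref{transferer} but gives no proof of it in this generality.
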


The definitiion of bounded variation can be generalized as follows 
\begin{definition}
For $p \geq 1$ we can define the   bounded $p$-variation semi-norm of a function $\psi: I \to \mathbb R$ which  takes the form  
$$
\|\psi\|_{BV} = 
\left( \sup\left\{ \sum_{i=0}^n \left| \psi(x_i) - \psi(x_{i+1})\right|^p
\hbox{ : } 0 = x_0 < x_1 \cdots < x_n < x_{n+1} = 1
 \right\}\right)^{1/p}
$$
and let $\|\psi\|_{L^1} = \int_I |\psi(x)| dx$ denote the $L^1$-norm.  
We let $BV_p(I)$ denote the Banach space 
of measurable functions $\psi: I \to \mathbb C$ with norm $\|\psi\| := \|\psi\|_{BV}
 + \|\psi\|_{L^1}  < +\infty$.
 \end{definition}
 
 An even larger space of functions in \cite{keller1} are the following:
 
 \begin{definition}
 Given $\epsilon > 0$ we denote
 $$
 \hbox{\rm osc}_1(x,\phi,\epsilon) = \hbox{\rm esssup}(\phi | B(x, \epsilon)) -  \hbox{\rm essinf}(\phi | B(x, \epsilon))
 $$
 and then we denote
 $
  \hbox{\rm osc}_1(\phi,\epsilon) := \int_I  \hbox{\rm osc}_1(x,\phi,\epsilon) dx
 $. Fix  $\alpha > 0$ and then  for $\epsilon_0 > 0$ we can then write
 $$
 \|\phi\|_{\alpha,1} := \sup_{0 < \epsilon \leq \epsilon_0} \frac{  \hbox{\rm osc}_1(\phi,\epsilon)}{\epsilon^\alpha}.
 $$
 We can then define a norm $\|\phi\| =  \|\phi\|_{\alpha,1} +  \|\phi\|_{L^1}$ and let $H^{\alpha,1}$ be the associated Banach space (see \cite{keller1}, Theorem 1.13,b).
 \end{definition}
 
 The following relationships between these spaces come from \cite{keller1} and \cite{lr}
 \begin{lemma}. Let  $p = \frac{1}{\alpha}$.
 \begin{enumerate}
 \item
 If $\phi: I \to \mathbb R$ is $\alpha$-H\"older continuous then
 $\psi \in BV_p(I)$.
 \item $BV_p \subset H^{\alpha,1}$.
 \end{enumerate}
 \end{lemma}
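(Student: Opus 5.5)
For part 1, I would work directly from the definition of the $p$-variation seminorm with $p = 1/\alpha$. Suppose $|\phi(x)-\phi(y)| \leq |\phi|_\alpha |x-y|^\alpha$ and take any partition $0 = x_0 < x_1 < \cdots < x_{n+1} = 1$. Then
$$
\sum_{i=0}^n |\phi(x_i)-\phi(x_{i+1})|^p \leq |\phi|_\alpha^p \sum_{i=0}^n |x_{i+1}-x_i|^{\alpha p} = |\phi|_\alpha^p \sum_{i=0}^n (x_{i+1}-x_i) = |\phi|_\alpha^p .
$$
The middle equality uses $\alpha p = 1$, and the last one is the telescoping sum. Taking the supremum over partitions gives $\|\phi\|_{BV} \leq |\phi|_\alpha$ in $BV_p$. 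The $L^1$ part is finite because $\phi$ is continuous on the compact interval, hence bounded. So $\phi \in BV_p(I)$ (the statement writes $\psi$, meaning the same function).

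For part 2, I would bound $\mathrm{osc}_1(\phi,\epsilon)$ by $C\epsilon^\alpha \,\mathrm{var}_p(\phi)$, where $\mathrm{var}_p(\phi)$ denotes the $p$-variation seminorm. First discretise: cover $I$ by the intervals $J_j = [j\epsilon,(j+1)\epsilon)$, $0 \leq j \leq M \approx 1/\epsilon$. For $x \in J_j$ the ball $B(x,\epsilon)$ lies inside $\tilde J_j := J_{j-1}\cup J_j\cup J_{j+1}$, so $\mathrm{osc}_1(x,\phi,\epsilon) \leq \mathrm{osc}(\phi,\tilde J_j) =: o_j$. Integrating over $x$ then gives $\mathrm{osc}_1(\phi,\epsilon) \leq \epsilon \sum_j o_j$.

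The key step is to apply H\"older's inequality with exponents $p$ and $p' = p/(p-1)$ to the sum of at most $1/\epsilon + 1$ terms:
$$
\sum_j o_j \leq (1/\epsilon + 1)^{1-1/p} \left(\sum_j o_j^p\right)^{1/p}.
$$
It remains to control $\sum_j o_j^p$. Split the indices $j$ into three residue classes mod $3$. Within one class the intervals $\tilde J_j$ are disjoint and ordered. In each $\tilde J_j$, choose two points whose values of $\phi$ nearly realise the oscillation $o_j$, and use them as consecutive points of a single partition of $I$. This gives $\sum_{j \equiv r} o_j^p \leq \mathrm{var}_p(\phi)^p$, hence $\sum_j o_j^p \leq 3\,\mathrm{var}_p(\phi)^p$.

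Combining the three bounds, and using $\epsilon \leq \epsilon_0$ so that $1/\epsilon + 1 \leq C/\epsilon$, gives
$$
\mathrm{osc}_1(\phi,\epsilon) \leq C \epsilon \cdot \epsilon^{-(1-\alpha)} \,\mathrm{var}_p(\phi) = C\epsilon^{\alpha}\,\mathrm{var}_p(\phi).
$$
Therefore $\|\phi\|_{\alpha,1} \leq C\,\mathrm{var}_p(\phi)$, and the $L^1$ norms coincide, so $BV_p \subset H^{\alpha,1}$ with a continuous inclusion.

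The main obstacle is that $BV_p$ functions are only defined pointwise, while $\mathrm{osc}_1$ uses esssup and essinf. This is harmless, because the essential oscillation over a ball is at most the pointwise oscillation on that ball, and the pointwise oscillation is what the partitions above control. Near the endpoints the balls are truncated, which only decreases the oscillation. One could instead cite \cite{keller1}, Theorem 1.13, and \cite{lr}, but the direct H\"older argument above suffices.
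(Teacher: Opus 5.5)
Your proof is correct, but it does not follow the paper's route, because the paper gives no proof of this lemma: it simply attributes both inclusions to \cite{keller1} and \cite{lr}.

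Part 1 is the standard telescoping estimate, giving $\mathrm{var}_p(\phi)\le|\phi|_\alpha$ since $\alpha p=1$. For part 2 you give a self-contained argument that checks out:
\begin{enumerate}
\item You discretise at scale $\epsilon$ and bound each essential oscillation by the pointwise oscillation $o_j$ on the triple interval $\tilde J_j$.
\item You apply H\"older's inequality to the roughly $1/\epsilon$ numbers $o_j$. This is legitimate because $p\ge 1$ is built into the paper's definition of $BV_p$.
\item You split the indices mod $3$. The intervals $\tilde J_j$ in one class are then disjoint and ordered, so near-extremal pairs from each can be concatenated into a single partition. This gives $\sum_j o_j^p\le 3\,\mathrm{var}_p(\phi)^p$.
\end{enumerate}
You use $o_j<\infty$ without comment. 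It follows from the same partition idea with just two points, since $|\phi(x)-\phi(y)|\le\mathrm{var}_p(\phi)$ for all $x,y$.

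Your route buys an explicit continuous embedding, $\|\phi\|_{\alpha,1}\le(1+\epsilon_0)^{1-\alpha}3^{\alpha}\,\mathrm{var}_p(\phi)$. It also handles the esssup/essinf issue explicitly. Neither of these is visible from the bare citation.

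The citation buys brevity. It also places the inclusion inside Keller's framework, where the $L^1$-compactness of the unit ball of $H^{\alpha,1}$ and the Lasota--Yorke machinery used in Lemma \ref{transferer} are developed alongside it.
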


The proof of Theorem \ref{generaler} requires a suitable generalization of Lemma \ref{transfer}.
 
 \begin{lemma}\label{transferer} Let $T$ be a piecewise monotonic transformation which is topologically exact.  Let $\phi: I \to \mathbb R$ be a (continuous) 
function of bounded variation such that either 
\begin{enumerate}
\item[a)] $\psi$ has $p$-summable variation; or
\item[b)] $\psi$ is $\alpha$-H\"older continuous.
\end{enumerate}
Let $\mathcal L_\phi$ be the associated transfer operator  on $BV_p$ and $H^{\alpha,1}$, respectively.
 \begin{enumerate}
 \item 
  There exists a maximal  eigenfunction $h \in BV(I)$ with $h > 0$ such that $\mathcal L_\phi h = e^{P(\phi)} h$ and, in particular, 
$$e^{P(\phi)} = \lim_{n\to +\infty} \left\| \mathcal L_\phi^n1(x) \right\|_\infty^\frac{1}{n}.$$
   \item
 There exists a non-atomic  probability measure $\nu_\phi$  such that $\mathcal L_\phi^*\nu_\phi = e^{P(\phi)} \nu_\phi$.   
 \item There exists $a > 0$ such that 
 $h(x) \geq a > 0$
 \item 
 There exists $C>0$ and $0 < \rho < 1$ such that  $\|\mathcal L_\phi^n(w) - e^{nP(\phi)} h \nu (w)\| \leq C (\rho e^{P(\phi)})^n \|w\|$ for any $w\in BV(I)$ and $n \geq 1$.
 \end{enumerate}
\end{lemma}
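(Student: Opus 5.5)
The plan is to follow the structure of the proof of Lemma \ref{transfer}, replacing the appeals to Walters' $\beta$-specific arguments with the general spectral theory of Keller \cite{keller1} and Hofbauer--Keller \cite{hk}. Those results treat transfer operators of piecewise monotonic maps acting on $BV_p$ and $H^{\alpha,1}$. By the relationships between the spaces recalled in the preceding lemma, case b) reduces to working on $H^{\alpha,1}$ with $\alpha$-H\"older weights, and case a) to working on $BV_p$. So it suffices to check, in each case, that $e^{\phi}$ is an admissible weight there.

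The first step is a Lasota--Yorke inequality $\|\mathcal L_\phi^n w\| \leq C\theta_n \|w\| + C_n\|w\|_{L^1}$, where $\theta_n^{1/n} \to \theta_\phi := \lim_n \| \exp(\sum_{k<n}\phi\circ T^k)\|_\infty^{1/n}$. For $H^{\alpha,1}$ this is Keller's Theorem 1.13. For $BV_p$ with summable variation, the summability of $\hbox{var}_n(\phi)$ gives bounded distortion of $e^{S_n\phi}$ on monotonicity intervals of $T^n$, and the standard argument runs as in \cite{baladikeller}. Combining this with compactness of the unit ball in $L^1$ (Hennion's theorem) gives quasi-compactness, with essential spectral radius at most $\theta_\phi$.

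Next I need $\theta_\phi < e^{P(\phi)}$ and the identification of $e^{P(\phi)}$ as the spectral radius $\lim\|\mathcal L^n_\phi 1\|_\infty^{1/n}$. The latter comes from \cite{hk} via the Markov extension. The inequality $\theta_\phi < e^{P(\phi)}$ is where topological exactness enters. Exactness gives $h_{top}(T)>0$ and lets one compare $\sup S_n\phi$ with the pressure. Alternatively, and this is what I would try first, one can use the version in \cite{keller1} where $\theta_\phi<e^{P}$ is a standing hypothesis verifiable for exact maps after replacing $\phi$ by a cohomologous, or iterated, potential. Given this, there is a leading eigenvalue $e^{P(\phi)}$ with a nonnegative eigenfunction $h$ and a conformal measure $\nu_\phi$ coming from the dual operator (Parts 1 and 2). $\nu_\phi$ is non-atomic because an atom would propagate under $\mathcal L_\phi^*$ along backward orbits, contradicting $\theta_\phi<e^{P}$.

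Part 3 uses the argument of Lemma \ref{transfer}(3) with exactness in place of full intervals. Since $h\not\equiv 0$ and $h \in BV_p$, $h \geq b>0$ on some ball $B(x_0,\epsilon)$ (after modification on a null set). Exactness gives $n$ with $T^n B(x_0,\epsilon)=I$. Then every $x$ has a preimage in that ball, so $h(x)=e^{-nP}\mathcal L^n_\phi h(x)\geq e^{-nP}e^{-n\|\phi\|_\infty} b=:a$.

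Part 4 needs the peripheral spectrum to be exactly $\{e^{P(\phi)}\}$, and this eigenvalue to be simple. Normalize to $\psi=\phi-P+\log h-\log h\circ T$, so that $\mathcal L_\psi 1=1$; this is legitimate by Part 3. A peripheral eigenfunction $\mathcal L_\psi g=\lambda g$ with $|\lambda|=1$ satisfies $|g|\le \mathcal L_\psi|g|$. A maximum-principle argument combined with exactness forces $|g|$ to be constant and $g\circ T=\lambda g$. Exactness then gives $\lambda=1$ and $g$ constant, as $T^n$ of a small interval where $g$ is nearly constant is all of $I$. The spectral gap then yields the estimate with some $\rho<1$. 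I expect the main obstacle to be establishing $\theta_\phi<e^{P(\phi)}$ together with this peripheral analysis in the $p$-variation setting, where pointwise arguments must be done modulo null sets.
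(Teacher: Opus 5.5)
Your architecture matches the paper's sketch: a Lasota--Yorke inequality plus $L^1$-compactness of the unit ball give quasi-compactness, exactness gives simplicity of the leading eigenvalue and no other peripheral spectrum, and the spectral gap gives Part 4. Your exactness arguments for Part 3 and for the peripheral spectrum are fine. The gap is the step everything else rests on, namely $\theta_\phi<e^{P(\phi)}$. Without it $e^{P(\phi)}$ need not be separated from the essential spectrum, so Parts 1 and 4 are not established, and neither is your non-atomicity argument. Neither of your suggested routes proves it.
\begin{itemize}
\item Positive topological entropy only gives $P(\phi)\ge h_{top}(T)+\inf\phi$. This beats $\log\theta_\phi\le\sup\phi$ only when $\sup\phi-\inf\phi<h_{top}(T)$, which is not assumed.
\item Passing to a cohomologous potential or to $(T^n,S_n\phi)$ cannot help. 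Both $\log\theta_\phi=\lim_n\frac1n\sup S_n\phi$ and $P(\phi)$ are unchanged by adding a bounded coboundary, and both are multiplied by $n$ under iteration. So the inequality holds for the new data if and only if it held for the old.
\item Exactness is not what delivers it. Since $\int\phi\,d\mu\le\frac1n\sup S_n\phi$ for every invariant $\mu$, the inequality forces $P(\phi)>\sup_\mu\int\phi\,d\mu$. This fails, for instance, for an exact Manneville--Pomeau type map (indifferent fixed point at $0$). Take $\phi\le 0$ to be a Lipschitz version of $-t\log|T'|$, $t\ge 1$, lowered near the branch points to make it continuous. Then $P(\phi)\le P(-t\log|T'|)\le 0$ by Ruelle's inequality, while $h(\delta_0)+\phi(0)=0$. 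So $\delta_0$ is an equilibrium state and $\theta_\phi=e^{P(\phi)}=1$.
\end{itemize}

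In the paper, exactness is used only for the peripheral spectrum (via \cite{lr}, Corollary 4.4). The inequality comes from a different order of construction:
\begin{itemize}
\item $\nu_\phi$ is first obtained as a fixed point of $\nu\mapsto\mathcal L_\phi^*\nu/\nu(\mathcal L_\phi 1)$;
\item one sets $\lambda=\nu_\phi(\mathcal L_\phi 1)$ and $g=e^{\phi}/\lambda$;
\item the bound $\|\prod_{i=0}^{n-1}g\circ T^i\|_\infty<1$ for large $n$ is imported from \cite{hk}, pp.~135--136;
\item only afterwards is $\lambda$ identified with $e^{P(\phi)}$.
\end{itemize}
You need to supply this input yourself, or state it as a hypothesis; it does not follow from the assumptions you invoke.

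A smaller slip: an atom of a conformal measure propagates along forward orbits, not backward ones. From $\mathcal L_\phi^*\nu=\lambda\nu$ one gets $\nu(\{Ty\})=\lambda e^{-\phi(y)}\nu(\{y\})$. Hence $\nu(\{T^ny\})=\lambda^n e^{-S_n\phi(y)}\nu(\{y\})\to\infty$ when $\lambda>\theta_\phi$, which is the contradiction you want. Going backwards the masses shrink, so no contradiction arises there.
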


\begin{proof}

Under hypothesis a) the results follow from the results in \cite{hk}.
Under hypothesis b) the results follow from the results in \cite{keller1}.
 We briefly recall the main ideas.  

The measure $\nu_\phi$ in Part 2 occurs as a  
fixed point for the map $\nu \mapsto \mathcal L_\phi\nu/\nu(1)$ on the space of probability measures, i.e., 
$\mathcal L_\phi \nu_\phi = \lambda \nu_\phi$, where $\lambda =\nu_\phi(1)$
(see  p.135 of \cite{hk}).  
Later one can identify $\lambda = e^{P(\phi)}$.

Let  $g(x) := e^{\phi(x)}/\log \lambda$ and then in each of the two cases one show that for large enough $n$
we have 
$$\|\prod_{i=0}^{n-1}g(T^ix)\|_\infty < 1$$ 
(see \cite{hk}, pp. 135-136). This implies that 
the operator  $P = \mathcal L_{\log g}$ satisfies a Lasota-Yorke inequality (see \cite{hk}, Lemma 7), i.e.,
there exists $0 < \rho < 1$ and $\beta > 0$ such that
\begin{enumerate}
\item[a)]
 under hypothesis a) 
$$
\|P^n f\|_{BV} \leq \beta \|f\|_{L^1} + \rho \| f\|_{BV}, \quad n \geq 0; 
$$
\item[b)] 
 under hypothesis b) 
$$
\|P^n f\|_{1,\alpha} \leq \beta \|f\|_{L^1} + \rho \| f\|_{1,\alpha}, \quad n \geq 0.
$$
\end{enumerate}
Moreover, the unit balls in $BV(I)$ and $H^{1,\alpha}$ are compact in the $L^1$-norm 
(by \cite{hk}, Lemma 5 and  \cite{keller1} respectively). 
This leads to the  quasi-compactness of the transfer operator $\mathcal L_{\phi}$ on the respective spaces, i.e., $\mathcal L_{\phi}$ has spectral radius 
$e^{P(\phi)}$  and essential spectral radius at most $0 < \rho < 1$.
The hypothesis of topological exactness implies that $e^{P(\phi)}$ is a simple eigenvalue and there 
are no other eigenvalues of modulus  $e^{P(\phi)}$.  This is nicely explained in the proof of Corollary 4.4 in \cite{lr}.   If $\pi_\phi$ is the one dimensional eigenprojection associated to 
$e^{P(\phi)}$ then we can let $h = \pi_\phi(1)$, the image of the constant function $1$, in Part 1.
Part 4 is a standard  application of the Ionescu-Tulcea and Marinescu theorem \cite{im}.

 \end{proof}
 
 \begin{rem}
 Similar results will hold for transformations $T: I \to I$ with a finite number of monotone branches
 providing there are additional hypotheses which ensures part 3 of the lemma.  
 \end{rem}

\begin{rem}
More generally, it would be sufficient to assume that $g: I \to \mathbb R$ has bounded $p$-variation, which would include the case of the $g$ being H\"older continuous.  
\end{rem}

\section{A Ruhr-Sarig type local result}

In the case that in (1.1) that $P(\phi) - (h(\mu) + \int \phi d\mu)$ is sufficiently small then a sightly different 
bound can be given by modifying the  proof in \cite{ruhrsarig}.



We can consider the pressure
$$
\begin{aligned}
P(\phi) &= \sup\{h(m) + \int \phi dm \hbox{ : } m = \hbox{ $T$-invariant }\}\cr
&=\log \rho(\mathcal L_\phi)
\end{aligned}
$$
(where $\rho(\cdot)$ is the spectral radius)
and for $t \in (-\epsilon, \epsilon)$
\begin{enumerate}
\item
The function $t \mapsto p(t) = P(\phi + t \psi)$ is analytic.
\item 
$\frac{d P(\phi + t \psi)}{dt}|_{t=0} = \int \psi d\mu_\phi =:a_0$
\item 
If  $\psi$ is not cohomologous to a coboundary plus a constant $\frac{d P^2(\phi + t \psi)}{dt^2}|_{t=0} < 0$ and
$P(\phi + t \psi)$ is convex in a neighbourhood of $0$.   
\end{enumerate}
Provided that  $a_1$ is sufficiently close to $a_0$ we can use the above properties to choose $t$
(close to $0$) such that 
$\frac{d P(\phi + t \psi)}{dt}|_{t=0} = \int \psi d\mu_\phi =:a_1$.

We can now  introduce the following restricted pressure function.

\begin{definition}
For $a \in \mathbb R$ we define
$$
Q(a) = \sup \left\{h(\mu) + \int \phi d\mu   \hbox{ : } \int \psi d\mu = a \right\}
$$
which is well defined provided $ \inf_m \{  \int \psi dm \} \leq a \leq \sup_m \{ \int \psi dm\}$
\end{definition}

In particular, we observe $q(a) \leq P(\phi)$.   Since the function $a \mapsto Q(a)$
is analytic we can deduce 
$\frac{d Q(a)}{da}|_{a_0}=0$

We can use the Taylor expansion at $a=a_0$ to write
$$
Q(a_0) - Q(a_1) = Q'(a_0) + Q''(a_0) (a_1-a_0)^2(1+o(1)).
$$

The function $Q$ is actually the Legendre transform of $P$.  More precisely,
$$
\begin{aligned}
P(t) &= h(m_t) + \int (\phi + t\psi) dm_t\cr
&= \underbrace{h(m_t) + \int \phi  dm_t }_{=: Q(t)} + t \int\psi dm_t  \cr
\end{aligned}
$$
where $m_t$ is the equilibrium state of $\phi+t\psi$.  This allows us to deduce that 
$\frac{d Q^2(a)}{dt^2}|_{a=a_0} = \frac{d P^2(t)}{dt^2}|_{t=t_0}$.  

Since 
$$
Q(a_1) \geq h(\nu) + \int \phi d\nu
$$
since $\int \psi d\nu = a_1$ this implies
$$
\begin{aligned}
P(\phi) - \left(h(\nu) + \int \phi d\nu \right) &\geq Q(a_0) - Q(a_1)\cr
&= \frac{d Q^2(a)}{dt^2}|_{a=a_0} (a_1 - a_0)^2 \left(1+o(1)\right)
\end{aligned}
$$

Finally, we conclude that for 
$\int \psi d\nu$ is sufficiently close to 
$\int \psi d\mu_\phi$ then we can bound 

$$
\left| \int f d\mu - \int f dm_\phi \right|
\leq (1+o(1)) 
\sqrt{|\frac{d P^2(t)}{dt^2}|_{t=0}|}
\sqrt{
P(\phi)
- \left(h(\mu) + \int \phi d\mu\right)} \eqno(1.1)
$$

\section{Miscellaneous Comments}

(a) 
The original applications of these pressure results  was to  subshifts of finite type and Axiom A diffeomorphisms \cite{ruhr}.\cite{kadyrov}  
However, by using a simple model by suspension flows \cite{bowen} the corresponding 
result also extends to Axiom A flows.  More precisely, assume that $\phi_t: \Lambda \to \Lambda$ is a $C^1$ Axiom A flow
restricted to a basic set,  $m_\phi$ is a $\phi$-invariant equilibrium state for a H\"older continuous potential $\phi: \Lambda \to \mathbb R$
and $F: \Lambda \to \mathbb R$ is H\"older continuous then 
$$\left| \int F d\mu - \int F dm_\phi \right|
\leq C \|F\|
\sqrt{
P(\phi)
- \left(h(\mu) + \int \phi d\mu\right)} 
$$

\medskip
\noindent
(b) The proof used the strong estimate in Part 5 of Lemma 2.2 to define  $Q$ in the proof of Lemma 2.6.
  However, under any weaker bounds on $\|U^n\| \to 0$  such that the series $Q = \sum_{n=0}^\infty U^n$ converges the same argument will hold.  
  
  \medskip
\noindent
(c)  It may be possible to extend the result to higher dimensional transformations with singularities.
 In light of \cite{ruhrsarig} one might ask if $\|f\|$ can be replaced by the variance $\sigma^2(f)$.

  \medskip
\noindent
(d) 
Ruhr and Sarig have a corresponding result for subshifts  where $\|f\|$ is replaced by an expression involving the variance
$\sigma^2(f)$  which gives a more refined estimate.    
It is a natural question to ask if  this is also true for  (1.1).

\end{document}